\newtheorem{theorem}{Theorem}
\newtheorem{corollary}{Corollary}
\newtheorem{definition}{Definition}
\newtheorem{example}{Example}
\newtheorem{remark}{Remark}
\renewcommand{\email}[2][]{%
  \ifx\emails\@empty\relax\else{\g@addto@macro\emails{,\space}}\fi%
  \@ifnotempty{#1}{\g@addto@macro\emails{\textrm{(#1)}\space}}%
  \g@addto@macro\emails{#2}%
}
\title{On a generalization of the Pentagonal Number Theorem}%
\subjclass[2010]{Primary 11B65; Secondary 05A17, 11A25, 11E25, 11P83.}
\keywords {integer partition, divisor sum, Bell polynomials, polygonal numbers, Pentagonal number theorem}
\thanks{
Supported by the UAEU grants: StartUp Grant 2016: G00002235}
\author{Ho-Hon Leung}
\address{Department of Mathematical Sciences, United Arab Emirates University, Al Ain, 15551, United Arab Emirates}
\email{hohon.leung@uaeu.ac.ae}
\begin{document}

\begin{abstract}
We study a generalization of the classical Pentagonal Number Theorem and its applications. We derive new identities for certain infinite series, recurrence relations and convolution sums for certain restricted partitions and divisor sums. We also derive new identities for Bell polynomials. 
\end{abstract}

\maketitle

\section{Introduction} \label{section1}

The {\it Pentagonal Number Theorem} is one of Euler's most profound discoveries. It is the following identity:
\[ \prod_{n=1}^\infty (1-q^n) = \sum_{n=-\infty}^\infty (-1)^n q^{n(3n-1)/2}
\]where $n(3n-1)/2$ is called the $n^\text{th}$ pentagonal numbers. The pentagonal numbers represent the number of distinct points which may be arranged to form superimposed regular pentagons such that the number of points on the sides of each respective pentagonal is the same. Bell's article \cite{Bell} is an excellent reference about the Pentagonal Number Theorem and its applications from the historical perspective. Andrews's article \cite{Andrews1} is devoted to a modern exposition of Euler's original proof of the theorem.

Let $\mathbb{N}=\{1,2,\dots\}$ and $\mathbb{N}_0=\{0,1,2,\dots\}$. The function $p(n)$ is the number of integer partitions of $n$. The function $\sigma(n)$ is the sum of all divisors of $n$. Euler applied the Pentagonal Number Theorem to study various properties of integer partitions and divisor sums. In particular, there is a convolution formula that connects the functions $p(n)$ and $\sigma(n)$:
\begin{align}
\label{euler} np(n) &=\sum_{k=0}^\infty \sigma(k) p(n-k).\end{align}Apart from this, there are recurrence relations for $p(n)$ and $\sigma(n)$ which are intimately related to the Pentagonal Number Theorem:
\[  p(n)=p(n-1)+p(n-2) -p(n-5) -p(n-7)+\dots, \]
\[ \sigma(n) =\sigma(n-1)+\sigma(n-2) - \sigma(n-5) - \sigma(n-7) +\dots.\]The readers are invited to read the survey article by Osler et. al.\cite{Osler} for a readable account of the connections between functions $p(n)$ and $\sigma(n)$.

One may wonder if there are recurrence relations and convolution formulas for (restricted) integer partitions and divisor sums in terms of other polygonal numbers (e.g. triangular numbers, heptagonal numbers $\dots$). The goal of this article is to give positive answers to this question based on a generalization of the Pentagonal Number Theorem.

Unless mentioned otherwise, throughout the paper, all equations in the variable $q$ which involve infinite sums and infinite products hold true if $|q|<1$.

\section{Main Results} \label{section2}

\subsection{Main theorem and some corollaries} \label{section2.1}

Let $g>3$ and $n\in\mathbb{Z}$. We denote the generalized $n^{\text{th}}$ $g$-gonal number (\cite[p.40]{Cheung}) by \[P_{g,n}=\frac{n((g-2)n-(g-4))}{2}.\]For $n\in\mathbb{N}$, $P_{g,n}$ represents the number of points which may be arranged to form regular $g$-gons such that the number of points on the sides of each respective $g$-gon is the same.

Let $(a;q)_n$ be the $q$-Pochhammer symbol for $n\geq 1$. That is, 
\[ (a;q)_n:=\prod_{k=0}^{n-1} (1-aq^k)=(1-a)(1-aq)(1-aq^2)\dots (1-aq^{n-1}).\]Considered as a formal power series in $q$, the definition of $q$-Pochhammer symbol can be extended to an infinite product. That is, \[(a;q)_\infty := \prod_{k=0}^\infty (1-aq^k).\]We note that $(q;q)_\infty$ is the Euler's function.

We state the following theorem.

\begin{theorem} \label{theorem1}Let $g\geq 5$.
\[   (q; q^{g-2})_\infty (q^{g-3}; q^{g-2})_\infty(q^{g-2}; q^{g-2})_\infty=\sum_{n=-\infty}^\infty (-1)^n  q^{P_{g,n}}.  \]
\end{theorem}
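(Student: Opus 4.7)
The plan is to derive Theorem \ref{theorem1} as a direct specialization of the Jacobi triple product identity, which I would take as a known classical result: for $|q|<1$ and $z\neq 0$,
\[
(q;q)_\infty\,(z;q)_\infty\,(q/z;q)_\infty \;=\; \sum_{n=-\infty}^\infty (-1)^n z^n q^{n(n-1)/2}.
\]
This identity has exactly the right shape, with three infinite products on the left and a bilateral sum with a quadratic-in-$n$ exponent on the right, so the only task is to choose the substitutions that align the product side with the left-hand side of the theorem and then confirm that the resulting exponent on the right equals $P_{g,n}$.

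First I would substitute $q\mapsto q^{g-2}$ throughout. This turns $(q;q)_\infty$ into $(q^{g-2};q^{g-2})_\infty$, matching the third factor on the left-hand side of the theorem. To make $(z;q^{g-2})_\infty$ into $(q;q^{g-2})_\infty$ I would then set $z=q$; this simultaneously converts $(q^{g-2}/z;q^{g-2})_\infty$ into $(q^{g-3};q^{g-2})_\infty$, matching the remaining factor. Note that the assumption $g\ge 5$ guarantees $g-3\ge 2>0$, so the substitution $z=q$ is admissible (no factor of the form $1-q^0$ appears) and the resulting infinite products converge for $|q|<1$.

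Next I would compute the exponent of $q$ that appears on the right after these substitutions, namely
\[
n + (g-2)\,\frac{n(n-1)}{2} \;=\; \frac{(g-2)n^2 + \bigl(2-(g-2)\bigr)n}{2} \;=\; \frac{(g-2)n^2-(g-4)n}{2} \;=\; P_{g,n},
\]
which is exactly the exponent in the statement. Combining the three steps yields the claimed identity.

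I do not anticipate a real obstacle here; the argument is essentially a verification that Theorem \ref{theorem1} is the specialization $(q\mapsto q^{g-2},\ z\mapsto q)$ of Jacobi's triple product. The only point worth flagging is the case $g=5$, where the identity specializes to Euler's classical pentagonal number theorem (since then $(q;q^3)_\infty(q^2;q^3)_\infty(q^3;q^3)_\infty=(q;q)_\infty$), providing a useful sanity check on the algebra. If a self-contained exposition were desired, one could alternatively supply a short proof of the Jacobi triple product itself (for instance via $q$-binomial manipulations or a combinatorial bijection on partitions into distinct parts), but invoking it as a known identity is the cleanest route.
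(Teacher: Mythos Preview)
Your proof is correct and follows essentially the same approach as the paper: both derive the identity as a direct specialization of the Jacobi triple product. The only cosmetic difference is that you start from the form $(q;q)_\infty(z;q)_\infty(q/z;q)_\infty=\sum_{n}(-1)^n z^n q^{n(n-1)/2}$ and substitute $q\mapsto q^{g-2}$, $z\mapsto q$, whereas the paper uses the form $\sum_n q^{n^2}z^n=\prod_n(1-q^{2n})(1+q^{2n-1}z)(1+q^{2n-1}z^{-1})$ with $q\mapsto q^{(g-2)/2}$ and $z\mapsto -q^{-(g-4)/2}$.
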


\begin{proof}
We replace $q$ by $q^{(g-2)/2}$ and $z$ by $q^{-(g-4)/2}$ in the Jacobi's Triple Product Identity, \[\sum_{n=-\infty}^\infty q^{n^2} z^n = \prod_{n=1}^\infty (1-q^{2n})(1+q^{2n-1}z)(1+q^{2n-1}z^{-1})\]to get the desired result.
\end{proof}

\begin{remark}
If $g=5$, we get Euler's Pentagonal Number Theorem by Theorem \ref{theorem1}.
\end{remark}

If $n\in\mathbb{N}$, then \[\frac{(-n)((g-2)(-n)-(g-4))}{2}=\frac{n((g-2)n+(g-4))}{2}.\]Let $g>3$ and $n\in\mathbb{N}$. Let $Q_{g,n}$ be the following numbers:\[Q_{g,n}=\frac{n((g-2)n+(g-4))}{2}.\]We note that $Q_{g,n}=P_{g,-n}$. Theorem \ref{theorem1} can be restated as 
\begin{align}
\label{equation1} (q; q^{g-2})_\infty (q^{g-3}; q^{g-2})_\infty(q^{g-2}; q^{g-2})_\infty&=1+ \sum_{n=1}^\infty  (-1)^n (q^{P_{g,n}} + q^{Q_{g,n}}).
\end{align}

\noindent The $n^{\text{th}}$ triangular number $\Delta_n$ is \[\Delta_n =\frac{n(n+1)}{2}=P_{3,n}.\]We note that \[ \Delta_{2n-1}=P_{6,n}, \quad \Delta_{2n} =Q_{6,n}. \]The following corollary is clear by (\ref{equation1}) when $g=6$.

\begin{corollary} \label{corollary0.1}
\[
(q; q^{4})_\infty (q^{3}; q^{4})_\infty(q^{4}; q^{4})_\infty= 1- \sum_{k=0}^\infty  \big(q^{\Delta_{4k+1}}+q^{\Delta_{4k+2}}\big) +\sum_{k=0}^\infty  \big(q^{\Delta_{4k+3}}+q^{\Delta_{4k+4}}\big).
\]
\end{corollary}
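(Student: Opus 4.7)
The plan is to apply equation~(\ref{equation1}) with $g=6$, and then split the alternating sum by the parity of the summation index so that the two subseries match the triangular-number patterns $\Delta_{4k+1},\Delta_{4k+2}$ and $\Delta_{4k+3},\Delta_{4k+4}$ stated in the corollary.

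First, I would verify the required numerical data. With $g=6$ one has $g-2=4$ and $g-3=3$, so the left-hand side of (\ref{equation1}) becomes exactly $(q;q^{4})_\infty (q^{3};q^{4})_\infty(q^{4};q^{4})_\infty$, matching the left-hand side of Corollary~\ref{corollary0.1}. For the exponents, the identifications $\Delta_{2n-1}=P_{6,n}$ and $\Delta_{2n}=Q_{6,n}$ are stated just above the corollary, but a quick sanity check is worth including: $P_{6,n}=\tfrac{n(4n-2)}{2}=n(2n-1)=\tfrac{(2n-1)(2n)}{2}=\Delta_{2n-1}$ and $Q_{6,n}=\tfrac{n(4n+2)}{2}=n(2n+1)=\tfrac{(2n)(2n+1)}{2}=\Delta_{2n}$.

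Next, with these substitutions equation~(\ref{equation1}) reads
\[
(q; q^{4})_\infty (q^{3}; q^{4})_\infty(q^{4}; q^{4})_\infty = 1+\sum_{n=1}^\infty (-1)^n\bigl(q^{\Delta_{2n-1}}+q^{\Delta_{2n}}\bigr).
\]
I would then split the sum over $n\ge 1$ into odd and even indices. Writing $n=2k+1$ (for which $(-1)^n=-1$, $2n-1=4k+1$, $2n=4k+2$) produces the negative contribution $-\sum_{k=0}^\infty\bigl(q^{\Delta_{4k+1}}+q^{\Delta_{4k+2}}\bigr)$. Writing $n=2k+2$ (for which $(-1)^n=+1$, $2n-1=4k+3$, $2n=4k+4$) produces the positive contribution $+\sum_{k=0}^\infty\bigl(q^{\Delta_{4k+3}}+q^{\Delta_{4k+4}}\bigr)$. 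Adding these two pieces to the leading $1$ gives precisely the right-hand side of the corollary.

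There is no real obstacle here: the argument is essentially a substitution in Theorem~\ref{theorem1} followed by a reindexing by parity. The only bookkeeping to be careful about is making sure that the ranges of $k$ start at $0$ (so that $n$ starts at $1$ in both subseries) and that the assigned signs agree with the sign of $(-1)^n$; both are checked in the paragraph above.
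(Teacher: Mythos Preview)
Your proposal is correct and follows exactly the paper's approach: the paper simply states that the corollary is clear from equation~(\ref{equation1}) with $g=6$ after recording the identifications $\Delta_{2n-1}=P_{6,n}$ and $\Delta_{2n}=Q_{6,n}$, and your argument makes the implicit parity split explicit.
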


\begin{corollary} \label{corollary0.2}
\[ 
\sum_{n=0}^\infty \frac{q^{n^2+n}}{(q;q)_n}=\Big( \sum_{n=0}^\infty p(n) q^n  \Big) \Big( 1+ \sum_{n=1}^\infty  (-1)^n (q^{P_{7,n}} + q^{Q_{7,n}})\Big).
\]
\end{corollary}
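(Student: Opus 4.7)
My plan is to interpret both sides of the identity as generating functions and recognize that, after applying Theorem \ref{theorem1} with $g=7$, the identity reduces to the second Rogers--Ramanujan identity. I would begin on the right-hand side: the bracketed factor $1+\sum_{n=1}^\infty (-1)^n(q^{P_{7,n}}+q^{Q_{7,n}})$ is exactly the right-hand side of (\ref{equation1}) specialized to $g=7$, so Theorem \ref{theorem1} lets me replace it with the triple product $(q;q^5)_\infty (q^4;q^5)_\infty (q^5;q^5)_\infty$. Combined with the standard generating function $\sum_{n=0}^\infty p(n)q^n = 1/(q;q)_\infty$, the right-hand side becomes
\[
\frac{(q;q^5)_\infty (q^4;q^5)_\infty (q^5;q^5)_\infty}{(q;q)_\infty}.
\]

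Next I would split the Euler product modulo $5$. Since every positive integer has a unique residue in $\{1,2,3,4,5\}\pmod 5$, one obtains the factorization
\[
(q;q)_\infty = (q;q^5)_\infty (q^2;q^5)_\infty (q^3;q^5)_\infty (q^4;q^5)_\infty (q^5;q^5)_\infty.
\]
Substituting this in the denominator, three of the five factors cancel and the right-hand side simplifies to $1/\bigl((q^2;q^5)_\infty (q^3;q^5)_\infty\bigr)$.

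Finally, I would invoke the second Rogers--Ramanujan identity
\[
\sum_{n=0}^\infty \frac{q^{n^2+n}}{(q;q)_n}=\frac{1}{(q^2;q^5)_\infty (q^3;q^5)_\infty},
\]
which matches the simplified right-hand side exactly, completing the proof. There is no real obstacle here beyond recognizing which classical identity is needed: the entire argument is a bookkeeping calculation once Theorem \ref{theorem1} is specialized to $g=7$ and the product $(q;q)_\infty$ is split into residue classes mod $5$. The only subtlety is remembering to cite the second Rogers--Ramanujan identity (with the $q^{n^2+n}$ numerator) rather than the first, so that the surviving factors on the product side are $(q^2;q^5)_\infty$ and $(q^3;q^5)_\infty$ rather than $(q;q^5)_\infty$ and $(q^4;q^5)_\infty$.
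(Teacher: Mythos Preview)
Your proof is correct and is essentially the paper's own argument run in the opposite direction: the paper starts from the Rogers--Ramanujan side, multiplies and divides by $(q;q^5)_\infty(q^4;q^5)_\infty(q^5;q^5)_\infty$ to reconstitute $(q;q)_\infty$ in the denominator, and then applies (\ref{equation1}) with $g=7$ and the generating function of $p(n)$. The ingredients---the second Rogers--Ramanujan identity, the residue-class factorization of $(q;q)_\infty$, and Theorem~\ref{theorem1} at $g=7$---are identical.
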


\begin{proof}
Based on one of the Rogers-Ramanujan identities (\cite{Ramanujan}, \cite{Rogers}), we get  
\begin{align}
\nonumber  \sum_{n=0}^\infty \frac{q^{n^2+n}}{(q;q)_n}  &= \frac{1}{(q^2; q^5)_\infty (q^3 ; q^5)_\infty}=\frac{(q;q^5)_\infty (q^4;q^5)_\infty (q^5;q^5)_\infty}{(q;q^5)_\infty (q^2;q^5)_\infty (q^3;q^5)_\infty (q^4; q^5)_\infty (q^5;q^5)_\infty}\\
\label{RR}  &=  \Big( \frac{1}{(q;q)_\infty}\Big) \Big( (q;q^5)_\infty (q^4;q^5)_\infty (q^5;q^5)_\infty \Big).
\end{align}We get the desired result based on the generating function of $p(n)$ and the identity (\ref{equation1}) when $g=7$.
\end{proof}

\begin{corollary} \label{corollary0.3}
\[
     1+ \sum_{n=1}^\infty  (-1)^n (q^{5P_{5,n}} + q^{5Q_{5,n}})=\Big( \sum_{n=0}^\infty \frac{q^{n^2}}{(q;q)_n} \Big)\Big( 1+ \sum_{n=1}^\infty  (-1)^n (q^{P_{7,n}} + q^{Q_{7,n}})\Big).
\]
\end{corollary}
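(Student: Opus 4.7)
The strategy is to reduce both sides to the product $(q^5;q^5)_\infty$, mirroring the approach used in the proof of Corollary \ref{corollary0.2}. The key observation is that the left-hand side is exactly the Pentagonal Number Theorem applied to $q^5$, while the right-hand side factors naturally through the \emph{other} Rogers--Ramanujan identity combined with Theorem \ref{theorem1} in the case $g=7$.

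First I would replace $q$ by $q^5$ in Euler's Pentagonal Number Theorem, or equivalently apply (\ref{equation1}) with $g=5$ and $q$ replaced by $q^5$, to obtain
\[
(q^5;q^5)_\infty = 1 + \sum_{n=1}^\infty (-1)^n \bigl(q^{5P_{5,n}} + q^{5Q_{5,n}}\bigr),
\]
which is precisely the left-hand side of the claimed identity.

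Next, for the right-hand side, I would invoke Theorem \ref{theorem1} with $g=7$, which (after rewriting in the form (\ref{equation1})) gives
\[
(q;q^5)_\infty (q^4;q^5)_\infty (q^5;q^5)_\infty = 1 + \sum_{n=1}^\infty (-1)^n \bigl(q^{P_{7,n}} + q^{Q_{7,n}}\bigr).
\]
Then I would use the companion Rogers--Ramanujan identity
\[
\sum_{n=0}^\infty \frac{q^{n^2}}{(q;q)_n} = \frac{1}{(q;q^5)_\infty (q^4;q^5)_\infty},
\]
to see that the product on the right-hand side of the corollary telescopes: the factors $(q;q^5)_\infty$ and $(q^4;q^5)_\infty$ cancel, leaving exactly $(q^5;q^5)_\infty$, which matches the left-hand side.

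There is no genuine obstacle here; the only thing to be careful about is bookkeeping the correct Rogers--Ramanujan identity (the one without the $+n$ in the exponent, in contrast to the one used in Corollary \ref{corollary0.2}) and ensuring that the exponents in $5P_{5,n}$ and $5Q_{5,n}$ really match $\tfrac{5n(3n\mp 1)}{2}$, which are the pentagonal-number exponents appearing after the substitution $q \mapsto q^5$ in Euler's theorem. Once both sides have been identified with $(q^5;q^5)_\infty$, the corollary follows immediately.
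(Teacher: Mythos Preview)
Your proposal is correct and follows essentially the same route as the paper: both identify the left-hand side with $(q^5;q^5)_\infty$ via the Pentagonal Number Theorem under $q\mapsto q^5$, and both identify the right-hand side with the same product by combining the Rogers--Ramanujan identity $\sum_{n\ge 0} q^{n^2}/(q;q)_n = 1/\bigl((q;q^5)_\infty(q^4;q^5)_\infty\bigr)$ with the $g=7$ case of Theorem~\ref{theorem1}. The only difference is cosmetic ordering of the steps.
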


\begin{proof}
We replace $q$ by $q^5$ in the Pentagonal Number Theorem to get  
\begin{align}
\label{PNT}  (q^5;q^5)_\infty &= 1+ \sum_{n=1}^\infty  (-1)^n (q^{5P_{5,n}} + q^{5Q_{5,n}}).
\end{align}On the other hand,
\begin{align}
\label{equation0.01}  (q^5;q^5)_\infty &= \Big( \frac{1}{(q;q)_\infty (q^4; q^5)_\infty}\Big) \Big( (q;q^5)_\infty (q^4;q^5)_\infty (q^5;q^5)_\infty \Big).
\end{align}We recall one of the Rogers-Ramanujan identities  (\cite{Ramanujan}, \cite{Rogers}),
\begin{align}
\label{equation0.02}  \sum_{n=0}^\infty \frac{q^{n^2}}{(q;q)_n}    &=  \frac{1}{(q;q)_\infty (q^4; q^5)_\infty}.
\end{align}By putting (\ref{PNT}), (\ref{equation0.02}) into (\ref{equation0.01}), we get the desired result based on the identity (\ref{equation1}) when $g=7$.
\end{proof}

\subsection{Recurrence relations for some restricted integer partitions} \label{section2.2}

\begin{definition}
The function $(p_{2,5}+p_{3,5})(n)$ is the number of partitions of $n$ such that each part is either congruent to $2$ modulo $5$ or $3$ modulo $5$. We extend the domain of $(p_{2,5}+p_{3,5})(n)$ to $\mathbb{Z}$ by setting $(p_{2,5}+p_{3,5})(x)=0$ if $x\notin \mathbb{N}_0$.
\end{definition}

\begin{theorem} \label{corollary0.25}
\[
(p_{2,5}+p_{3,5})(n)=p(n)+\sum_{k=1}^\infty (-1)^k (p(n-P_{7,k}) +p(n-Q_{7,k})).\]
\end{theorem}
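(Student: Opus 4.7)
The plan is to prove the identity by comparing coefficients of $q^n$ on both sides of the generating-function identity in Corollary \ref{corollary0.2}.

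First I would identify the generating function on the left-hand side of Corollary \ref{corollary0.2}. By the Rogers--Ramanujan identity used in the proof of that corollary,
\[
\sum_{n=0}^\infty \frac{q^{n^2+n}}{(q;q)_n} = \frac{1}{(q^2;q^5)_\infty (q^3;q^5)_\infty}.
\]
The right-hand side here is, by the standard interpretation of such infinite products, the generating function for partitions into parts each of which is congruent to $2$ or $3$ modulo $5$. In other words,
\[
\sum_{n=0}^\infty \frac{q^{n^2+n}}{(q;q)_n} = \sum_{n=0}^\infty (p_{2,5}+p_{3,5})(n)\, q^n.
\]

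Next, I would expand the right-hand side of Corollary \ref{corollary0.2} as a product of power series. Using the generating function $\sum_{n\ge 0} p(n) q^n = 1/(q;q)_\infty$ and multiplying by $1 + \sum_{k\ge 1}(-1)^k(q^{P_{7,k}}+q^{Q_{7,k}})$, the coefficient of $q^n$ is
\[
p(n) + \sum_{k=1}^{\infty}(-1)^k\bigl(p(n-P_{7,k}) + p(n-Q_{7,k})\bigr),
\]
where the convention $p(m)=0$ for $m<0$ (which is consistent with the extension of $(p_{2,5}+p_{3,5})$ to $\mathbb{Z}$ in the definition) ensures the sum is actually finite for each fixed $n$.

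Finally, equating coefficients of $q^n$ on both sides of Corollary \ref{corollary0.2} yields the claimed identity. The only real obstacle is the bookkeeping for the extended domain of $p$ and $(p_{2,5}+p_{3,5})$ at negative arguments, but since both are defined to vanish outside $\mathbb{N}_0$ and the series on the right of Corollary \ref{corollary0.2} converge as formal power series in $q$, no analytic subtlety arises; the identity follows immediately from the coefficient comparison.
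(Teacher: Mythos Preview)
Your proposal is correct and follows essentially the same approach as the paper: you invoke Corollary~\ref{corollary0.2}, identify its left-hand side as the generating function $\sum_{n\ge 0}(p_{2,5}+p_{3,5})(n)q^n$ via the Rogers--Ramanujan product $1/\bigl((q^2;q^5)_\infty(q^3;q^5)_\infty\bigr)$, and then compare coefficients of $q^n$. The paper's proof is identical in substance, just slightly more terse.
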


\begin{proof}
By Corollary \ref{corollary0.2} and the generating function of $(p_{2,5}+p_{3,5})(n)$, we get 
\begin{align*}
\sum_{n=0}^\infty (p_{2,5}+p_{3,5})(n) q^n &=\frac{1}{(q^2; q^5)_\infty (q^3 ; q^5)_\infty} \\ 
&= \Big( \sum_{n=0}^\infty p(n) q^n  \Big) \Big( 1+ \sum_{n=1}^\infty  (-1)^n (q^{P_{7,n}} + q^{Q_{7,n}})\Big).
\end{align*}We get the desired result by comparing coefficients of $q^n$ on both sides of the equation.
\end{proof}

\begin{example}
Let $n=8$. Then we have $8=2+2+2+2=3+3+2$. So, $(p_{2,5}+p_{3,5})(8)=3$. The generalized heptagonal numbers are $P_{7,1}=1, Q_{7,1}=4,  P_{7,2}=7 \dots$ and hence \begin{align*}
p(8)-p(8-P_{7,1})-p(8-Q_{7,1})+p(8-P_{7,2})&=p(8)-p(7)-p(4)+p(1)\\&=22-15-5+1=3\\&=(p_{2,5}+p_{3,5})(8).\end{align*}

\end{example}

\begin{definition}
The function $(p_{1,5}+p_{4,5})(n)$ is the number of partitions of $n$ such that each part is either congruent to $1$ modulo $5$ or $4$ modulo $5$. We extend the domain of $(p_{1,5}+p_{4,5})(n)$ to $\mathbb{Z}$ by setting $(p_{1,5}+p_{4,5})(x)=0$ if $x\notin \mathbb{N}_0$.
\end{definition}

\begin{theorem} \label{corollary0.35}
\begin{align*}
  (p_{1,5}+p_{4,5})(n) &=
    M+\sum_{k=1}^\infty (-1)^{k+1}\big( (p_{1,5}+p_{4,5})(n-P_{7,k})+(p_{1,5}+p_{4,5})(n-Q_{7,k}))\big). 
\end{align*}where 
\begin{align*}
  M &=
\begin{cases}
    (-1)^m, &n=5P_{5,m}\text{ or }5Q_{5,m}\text{ for some }m; \\
    0,              & \text{otherwise.}
\end{cases}
\end{align*}
\end{theorem}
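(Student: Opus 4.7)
The plan is to mirror the proof of Theorem \ref{corollary0.25}, replacing Corollary \ref{corollary0.2} by Corollary \ref{corollary0.3}. First I would identify the generating function for $(p_{1,5}+p_{4,5})(n)$. Since the allowed parts are the positive integers congruent to $1$ or $4$ modulo $5$, the standard partition generating function argument gives
\[\sum_{n=0}^\infty (p_{1,5}+p_{4,5})(n)\, q^n = \frac{1}{(q;q^5)_\infty (q^4;q^5)_\infty}.\]
The Rogers--Ramanujan identity invoked in the proof of Corollary \ref{corollary0.3} (equation (\ref{equation0.02})) rewrites this exactly as $\sum_{n=0}^\infty q^{n^2}/(q;q)_n$, so the generating function for $(p_{1,5}+p_{4,5})(n)$ is precisely the first factor appearing on the right-hand side of Corollary \ref{corollary0.3}.

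Next I would substitute this into Corollary \ref{corollary0.3} to obtain
\[1+\sum_{n=1}^\infty (-1)^n\bigl(q^{5P_{5,n}}+q^{5Q_{5,n}}\bigr) = \Big(\sum_{n=0}^\infty (p_{1,5}+p_{4,5})(n)\,q^n\Big)\Big(1+\sum_{n=1}^\infty (-1)^n\bigl(q^{P_{7,n}}+q^{Q_{7,n}}\bigr)\Big),\]
and compare coefficients of $q^n$ on both sides. The coefficient of $q^n$ on the right-hand side is
\[(p_{1,5}+p_{4,5})(n) + \sum_{k=1}^\infty (-1)^k\bigl((p_{1,5}+p_{4,5})(n-P_{7,k})+(p_{1,5}+p_{4,5})(n-Q_{7,k})\bigr),\]
while the coefficient on the left-hand side is the sparse signed quantity $(-1)^m$ whenever $n=5P_{5,m}$ or $n=5Q_{5,m}$ for some $m$ (taking $m=0$ with $P_{5,0}=Q_{5,0}=0$ to capture the constant term $1$ when $n=0$), and $0$ otherwise; this is exactly the quantity $M$. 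Solving for $(p_{1,5}+p_{4,5})(n)$ yields the asserted recurrence.

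The main point requiring care, rather than a genuine obstacle, is the bookkeeping of $M$: one must confirm that the exponents $\{5P_{5,m}\}_{m\geq 1}\cup\{5Q_{5,m}\}_{m\geq 1}$ on the left-hand side are pairwise distinct (which is immediate from the strictly increasing nature of the pentagonal sequences), so that the piecewise definition of $M$ is unambiguous and matches the expansion term by term, including the correct sign from $(-1)^m$.
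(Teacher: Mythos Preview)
Your proposal is correct and follows essentially the same route as the paper: identify $\sum_{n\ge 0}(p_{1,5}+p_{4,5})(n)q^n=1/\bigl((q;q^5)_\infty(q^4;q^5)_\infty\bigr)$, plug this into the identity of Corollary~\ref{corollary0.3} (equivalently, into the factorization~(\ref{equation0.01}) together with~(\ref{PNT}) and~(\ref{equation1}) for $g=7$), and compare coefficients of $q^n$. Your extra remark on the distinctness of the exponents $5P_{5,m}$, $5Q_{5,m}$ is a harmless bit of added care not made explicit in the paper.
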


\begin{proof}
The generating function of $(p_{1,5}+p_{4,5})(n)$ is \[ \sum_{n=0}^\infty (p_{1,5}+p_{4,5})(n) q^n=\frac{1}{(q; q^5)_\infty (q^4 ; q^5)_\infty}.\]
Now the result is obvious by comparing coefficients of $q^n$ in (\ref{equation0.01}) and by Corollary \ref{corollary0.3}.
\end{proof}

\begin{example}
Let $n=9$. The generalized heptagonal numbers are $P_{7,1}=1, Q_{7,1}=4, P_{7,2}=7 \dots$. By Theorem \ref{corollary0.35}, we get \[(p_{1,5}+p_{4,5})(9)=(p_{1,5}+p_{4,5})(8)+(p_{1,5}+p_{4,5})(5)-(p_{1,5}+p_{4,5})(2). \]It can be easily verified since 
\begin{align*}
(p_{1,5}+p_{4,5})(9)&=4,\quad 9=1+1+1+1+1+1+1+1+1=4+1+1+1+1+1=4+4+1,\\
(p_{1,5}+p_{4,5})(8)&=3, \quad 1+1+1+1+1+1+1+1=4+1+1+1+1=4+4,\\
(p_{1,5}+p_{4,5})(5)&=2, \quad 1+1+1+1+1=4+1,\\
(p_{1,5}+p_{4,5})(2)&=1, \quad 1+1.
\end{align*}
\end{example}

\begin{definition}
The function $q(n)$ is the number of partitions of $n$ such that each part is distinct. We extend the domain of $q(n)$ to $\mathbb{Q}$ by setting $q(x)=0$ if $x\notin \mathbb{N}_0$.
\end{definition}

\noindent We obtain two recurrence relations for $q(n)$.

\begin{theorem} \label{lemma1}
\[
q(n) = K+\sum_{k=1}^\infty (-1)^{k+1} \big( q(n-2P_{5,k}) +q(n-2Q_{5,k})  \big)
\]where 
\begin{align*}
  K &=
\begin{cases}
    1, &n=\Delta_m\text{ for some }m; \\
    0,              & \text{otherwise.}
\end{cases}
\end{align*}
\end{theorem}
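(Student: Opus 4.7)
My plan is to mimic the template used in Theorems \ref{corollary0.25} and \ref{corollary0.35}: rewrite the recurrence as a generating-function identity, then massage the product of Euler factors into a known theta-type series so that comparing coefficients of $q^n$ yields the claimed indicator $K$.

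First, I would move the sum to the left-hand side and observe that the stated recurrence is equivalent to asserting that the coefficient of $q^n$ in
\[
\Bigl(\sum_{m=0}^\infty q(m)\, q^m\Bigr)\Bigl(1+\sum_{k=1}^\infty (-1)^k\bigl(q^{2P_{5,k}}+q^{2Q_{5,k}}\bigr)\Bigr)
\]
is $K$. The standard generating function for partitions into distinct parts gives $\sum_{m\ge 0}q(m)q^m=(-q;q)_\infty=(q^2;q^2)_\infty/(q;q)_\infty$, while replacing $q$ by $q^2$ in Euler's Pentagonal Number Theorem (the $g=5$ case of Theorem \ref{theorem1}, combined with (\ref{equation1})) shows that the second factor equals $(q^2;q^2)_\infty$. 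Therefore the product in question is
\[
\frac{(q^2;q^2)_\infty}{(q;q)_\infty}\cdot (q^2;q^2)_\infty=\frac{(q^2;q^2)_\infty^{\,2}}{(q;q)_\infty}.
\]

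Next, I would split the Euler product according to the parity of the exponent, $(q;q)_\infty=(q;q^2)_\infty(q^2;q^2)_\infty$, which simplifies the expression to $(q^2;q^2)_\infty/(q;q^2)_\infty$. The key input is Gauss's classical identity for triangular numbers,
\[
\sum_{m=0}^\infty q^{m(m+1)/2}=\frac{(q^2;q^2)_\infty}{(q;q^2)_\infty},
\]
which can itself be obtained from the Jacobi Triple Product (so it is in the same spirit as the proof of Theorem \ref{theorem1}). Combining these gives
\[
\Bigl(\sum_{m=0}^\infty q(m)\, q^m\Bigr)\bigl(q^2;q^2\bigr)_\infty=\sum_{m=0}^\infty q^{\Delta_m}.
\]
Comparing the coefficient of $q^n$ on both sides then produces exactly $K=1$ when $n=\Delta_m$ for some $m\in\mathbb{N}_0$ and $K=0$ otherwise, which is the claimed recurrence.

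I don't anticipate a genuine obstacle: once one identifies the relevant generating functions, the proof is a short product manipulation. The only nontrivial step is recognizing that the factor $(q^2;q^2)_\infty^{\,2}/(q;q)_\infty$ telescopes to Gauss's triangular theta series, so the main task is to cite (or briefly derive from Jacobi's Triple Product) Gauss's identity and to invoke the $g=5$ case of Theorem \ref{theorem1} with $q$ replaced by $q^2$.
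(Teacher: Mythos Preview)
Your proposal is correct and follows essentially the same route as the paper: both arguments multiply the generating function for $q(n)$ by $(q^2;q^2)_\infty$ (obtained from the Pentagonal Number Theorem with $q\mapsto q^2$), identify the result with Gauss's triangular-number series $(q^2;q^2)_\infty/(q;q^2)_\infty=\sum_{m\ge 0}q^{\Delta_m}$, and compare coefficients. The only cosmetic difference is that the paper uses $\sum_m q(m)q^m=1/(q;q^2)_\infty$ directly (Euler's odd $=$ distinct theorem), whereas you start from $(-q;q)_\infty=(q^2;q^2)_\infty/(q;q)_\infty$ and then simplify, arriving at the same product.
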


\begin{proof}
By an identity due to Guass (\cite[p.40]{Cheung}), 
\begin{align}
\label{c1} \sum_{n=0}^\infty q^{\Delta_n} &= \frac{(q^2;q^2)_\infty}{(q;q^2)_\infty} =\Big((q^2;q^2)_\infty )\Big)\cdot \Big( \frac{1}{(q;q^2)_\infty} \Big)=\Big( (q^2;q^2)_\infty )   \Big)\cdot \Big( \sum_{n=1}^\infty q(n) q^n  \Big)
\end{align}where the last equality is due to Euler's Theorem. We replace $q$ by $q^2$ in the Pentagonal Number Theorem to get  
\begin{align}
\label{PNT2}  (q^2;q^2)_\infty &= 1+ \sum_{n=1}^\infty  (-1)^n (q^{2P_{5,n}} + q^{2Q_{5,n}}).
\end{align}By putting (\ref{PNT2}) into (\ref{c1}) and comparing coefficients of $q^n$ on both sides of the equation, we get the result as desired.
\end{proof}

\begin{theorem} \label{lemma2}
\[
q(n) = L+\sum_{k=1}^\infty (-1)^{k+1} \Big( q\big( \frac{2n-P_{5,k}}{2}\big) +q\big( \frac{2n-Q_{5,k}}{2}\big)  \Big)
\]where 
\begin{align*}
  L &=
\begin{cases}
    -1, &2n=\Delta_{4k+1}, \Delta_{4k+2}\text{ for some }k; \\
    1, &2n=\Delta_{4k+3}, \Delta_{4k+4}\text{ for some }k; \\
    0,              & \text{otherwise.}
\end{cases}
\end{align*}
\end{theorem}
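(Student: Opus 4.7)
My plan is to follow the same strategy as the proof of Theorem~\ref{lemma1}, but replace Gauss's triangular-number identity by Corollary~\ref{corollary0.1}. The key generating-function identity I want to establish first is
\[
(q;q)_\infty \sum_{n=0}^\infty q(n)\, q^{2n} \;=\; (q;q^2)_\infty (q^4;q^4)_\infty.
\]
By Euler's theorem, $\sum q(n) q^n = 1/(q;q^2)_\infty$, so $\sum q(n) q^{2n} = 1/(q^2;q^4)_\infty$; combining this with the elementary factorization $(q;q)_\infty = (q;q^2)_\infty (q^2;q^4)_\infty (q^4;q^4)_\infty$ (obtained by splitting $\prod_n (1-q^n)$ according to the residue of $n$ modulo $4$) yields the identity after cancelling the middle factor.

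Next, I would expand each side separately. Applying the Pentagonal Number Theorem to $(q;q)_\infty$ and distributing over the sum, the coefficient of $q^{2n}$ on the left becomes
\[
q(n) + \sum_{k=1}^\infty (-1)^k \Big( q\big((2n-P_{5,k})/2\big) + q\big((2n-Q_{5,k})/2\big) \Big),
\]
where the convention $q(x)=0$ for $x \notin \mathbb{N}_0$ automatically kills any term in which $P_{5,k}$ or $Q_{5,k}$ is odd. Applying Corollary~\ref{corollary0.1} to the right-hand side, the coefficient of $q^{2n}$ is exactly the quantity $L$ from the theorem statement. Solving for $q(n)$ and flipping the sign of the summation index via $(-1)^k \to -(-1)^{k+1}$ produces the stated recurrence.

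The only real subtlety is parity bookkeeping: one checks that $\Delta_n$ is even precisely when $n \equiv 0,3 \pmod 4$, so only the $\Delta_{4k+3}$ and $\Delta_{4k+4}$ contributions from Corollary~\ref{corollary0.1} can actually land on an even exponent $q^{2n}$. In particular, the ``$L=-1$'' branch of the statement is in fact vacuous for integer $n$, though the formula is certainly valid as written. One should also verify the $n=0$ case separately (both sides equal $1$, which is consistent with reading $\Delta_0=0$ into the definition of $L$). Beyond this bookkeeping, no calculation is harder than routine $q$-Pochhammer manipulation.
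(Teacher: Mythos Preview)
Your proof is correct and follows essentially the same route as the paper: the identity $(q;q)_\infty \cdot \frac{1}{(q^2;q^4)_\infty} = (q;q^4)_\infty (q^3;q^4)_\infty (q^4;q^4)_\infty$ (your $(q;q^2)_\infty(q^4;q^4)_\infty$ is the same product), followed by the Pentagonal Number Theorem on one side and Corollary~\ref{corollary0.1} on the other, then coefficient comparison. Your additional remark that the $L=-1$ branch is vacuous because $\Delta_{4k+1}$ and $\Delta_{4k+2}$ are always odd is a nice observation the paper does not make.
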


\begin{proof}
\begin{align}
\label{c2} (q;q^4)_\infty (q^3;q^4)_\infty (q^4;q^4)_\infty &= \Big( (q;q)_\infty \Big) \Big( \frac{1}{(q^2;q^4)_\infty} \Big).
\end{align}We replace $q$ by $q^2$ in Euler's Theorem to get 
\begin{align}
\label{c3} \Big( \frac{1}{(q^2;q^4)_\infty} \Big) &= \prod_{k=1}^\infty (1+q^{2k}) =\sum_{n=0}^\infty q(n) q^{2n}.
\end{align} By putting (\ref{c3}) into (\ref{c2}), applying Corollary \ref{corollary0.1} to the left hand side of (\ref{c2}) and applying the Pentagonal Number Theorem to $(q;q)_\infty$, we get
\begin{align}
\label{c4} 1- \sum_{k=0}^\infty  \big(q^{\Delta_{4k+1}}+q^{\Delta_{4k+2}}\big) +\sum_{k=0}^\infty  \big(q^{\Delta_{4k+3}}+q^{\Delta_{4k+4}}\big) &= \Big(1+ \sum_{n=1}^\infty  (-1)^n (q^{P_{5,n}} + q^{Q_{5,n}})\Big)\cdot \Big( \sum_{n=1}^\infty q(n) q^{2n}\Big).
\end{align}Comparing the coefficients of $q^n$ on both sides of (\ref{c4}), we get the result as desired.
\end{proof}

\begin{example}
Let $n=15$. We note that $15=\Delta_{5}$. The generalized pentagonal numbers are \[P_{5,1}=1, \text{ }Q_{5,1}=2,\text{ } P_{5,2}=5,\text{ } Q_{5,2}=7,\text{ } P_{5,3}=12, \text{ } Q_{5,3}=15,\text{ } P_{5,4}=22,\text{ } Q_{5,4}=26\dots.\]By Theorem \ref{lemma1}, 
\begin{align*}
 q(15)&= 1+( q(15-2P_{5,1}) +q(15-2Q_{5,1})    ) -(q(15-2P_{5,2}) +q(15-2Q_{5,2})) \\
   &= 1+q(13)+q(11)-q(5) -q(1)=1+18+12-3-1=27.
\end{align*}Alternatively, by Theorem \ref{lemma2} and the fact that $2(15)=30\neq \Delta_{m} $ for any $m\in\mathbb{N}$.
\begin{align*}
 q(15) &= 0+q\Big(\frac{30-2}{2}\Big)+q\Big(\frac{30-12}{2}\Big)-q\Big(\frac{30-22}{2}\Big) -q\Big(\frac{30-26}{2}\Big)\\
  &= q(14)+q(9)-q(4)-q(2)=22+8-2-1=27.
\end{align*}
\end{example}

\begin{definition} 
Let $n, r\in\mathbb{N}_0$. Let $m\in \mathbb{N}$. The function $p_{r,m}(n)$ is defined to be the number of partitions of $n$ such that each part is congruent to $r$ modulo $m$. We extend the domain of $p_{r,m}(n)$ to $\mathbb{Z}$ by setting $p_{r,m}(x)=0$ if $x\notin \mathbb{N}_0$.
\end{definition}

\begin{definition}
Let $n\in\mathbb{Z}$ and $m\in\mathbb{N}$. The restricted integer partition $p'_m (n)$ is defined by \[p'_m(n):=p_{m-1,m}(n)+p_{0,m}(n)+p_{1,m}(n)\]where $p'_m(0):=1$ and $p'_m(n)=0$ for $n\notin\mathbb{N}_0$.
\end{definition}

\noindent In particular, if $m=3$, then $p'_m(n)=p'_3(n)=p(n)$.

The generating function of $p'_m(n)$ is as follows: 
\begin{align}
\label{equation0.5} \sum_{n=0}^\infty p'_m(n) q^n &=\frac{1}{(q;q^m)_\infty (q^{m-1};q^m)_\infty (q^m; q^m)_\infty}.
\end{align}

\noindent It is convenient to introduce the number $e_{g,n}$ given by 
\begin{align*}
  e_{g,n} &=
\begin{cases}
    1, &n=0; \\
    (-1)^k,& \text{if }  n=P_{g,k}\text{ or }Q_{g,k}; \\
    0,              & \text{otherwise.}
\end{cases}
\end{align*}
Theorem \ref{theorem1} can be rewritten as 
\begin{align}
\label{equation2} (q; q^{g-2})_\infty (q^{g-3}; q^{g-2})_\infty(q^{g-2}; q^{g-2})_\infty&=\sum_{n=0}^\infty e_{g,n} q^n.
\end{align}

\noindent We obtain the following recurrence relation for the restricted partition $p'_m(n)$ if $m\geq 3$.

\begin{theorem} \label{corollary1}
Let $m\geq 3$  and $n\in \mathbb{N}$.
\begin{align*}
  p'_{m}(n)=\sum_{k=1}^\infty (-1)^{k+1} \big(p'_{m}(n- P_{m+2,k})+p'_m(n-Q_{m+2,k})\big). 
\end{align*}
\end{theorem}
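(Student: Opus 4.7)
The plan is to exploit the fact that the generating function of $p'_m(n)$ is precisely the reciprocal of the infinite product that Theorem \ref{theorem1} evaluates. Specifically, set $g=m+2$ in Theorem \ref{theorem1}, so that $g-2 = m$ and $g-3 = m-1$. Combined with (\ref{equation0.5}) and the rewriting (\ref{equation2}), this yields
\begin{align*}
\Bigl(\sum_{n=0}^\infty p'_m(n) q^n\Bigr)\cdot\Bigl(\sum_{n=0}^\infty e_{m+2,n} q^n\Bigr) = 1.
\end{align*}

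Next I would compare the coefficient of $q^n$ on both sides. For $n \geq 1$ the right-hand side contributes $0$, giving the convolution identity
\begin{align*}
\sum_{k=0}^n p'_m(n-k)\,e_{m+2,k} = 0.
\end{align*}
Isolating the $k=0$ term, which equals $p'_m(n)$ since $e_{m+2,0}=1$, one obtains
\begin{align*}
p'_m(n) = -\sum_{k=1}^n p'_m(n-k)\, e_{m+2,k}.
\end{align*}

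Finally, by the definition of $e_{m+2,k}$, the only nonzero contributions on the right come from indices $k = P_{m+2,j}$ or $k = Q_{m+2,j}$ for some $j \geq 1$, and at such indices $e_{m+2,k} = (-1)^j$. Reindexing the sum by $j$ and absorbing the sign then produces exactly
\begin{align*}
p'_m(n) = \sum_{j=1}^\infty (-1)^{j+1}\bigl(p'_m(n - P_{m+2,j}) + p'_m(n - Q_{m+2,j})\bigr),
\end{align*}
which is the claimed recurrence; the convention $p'_m(x)=0$ for $x \notin \mathbb{N}_0$ makes the infinite sum effectively finite. The only real care needed is the sign bookkeeping between $e_{m+2,k}$ and $(-1)^{k+1}$, but no genuine obstacle arises: the whole argument is a single generating-function inversion followed by extraction of coefficients.
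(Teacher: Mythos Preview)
Your proposal is correct and follows essentially the same approach as the paper: both multiply the generating function of $p'_m(n)$ by the product from Theorem~\ref{theorem1} with $g=m+2$, obtain $\bigl(\sum p'_m(n)q^n\bigr)\bigl(\sum e_{m+2,n}q^n\bigr)=1$, and then compare coefficients of $q^n$ for $n\ge 1$. Your write-up is in fact a bit more explicit than the paper's in spelling out the reindexing from $k$ to the polygonal indices $j$, but the underlying argument is identical.
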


\begin{proof}
By (\ref{equation0.5}) and (\ref{equation2}), 
\begin{align*}
\Big(\sum_{n=0}^\infty p'_m(n) q^n \Big)\Big( \sum_{n=0}^\infty e_{m+2,n} q^n  \Big) &= 1
\end{align*}By comparing coefficients of $q^n$ for $n\geq 1$ on both sides of the equation, we get 
\begin{align*}
  p'_m(n) + \sum_{k=1}^\infty (-1)^k \big( p'_m(n-P_{m+2,k})+p'_m(n-Q_{m+2,k})\big) &= 0.
\end{align*}
\end{proof}

\begin{remark}
In the case $m=3$, Theorem \ref{corollary1} is reduced to the well-known recurrence relation for the integer partition $p(n)$.
\end{remark}

\begin{definition}
The function $p'_{e,m}(n)$ is the number of partitions of $n$ such that each partition has an even number of distinct parts, and each part is congruent to $0$ modulo $m$, $1$ modulo $m$ or $m-1$ modulo $m$. 
\end{definition}

\begin{definition}
The function $p'_{o,m}(n)$ is the number of partitions of $n$ such that each partition has an odd number of distinct parts, and each part is congruent to $0$ modulo $m$, $1$ modulo $m$ or $m-1$ modulo $m$. 
\end{definition}

\begin{theorem} \label{legendre}
Let $n\in\mathbb{N}$ and $m\geq 3$.
\begin{align*}
  p'_{e,m}(n) -p'_{o,m}(n) &= 
\begin{cases}
    (-1)^k, &n=P_{m+2,k}\text{ or }Q_{m+2,k}\text{ for some }k\in\mathbb{N}; \\
    0,              & \text{otherwise.}
\end{cases}
\end{align*}
\end{theorem}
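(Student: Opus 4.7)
The plan is to interpret the difference $p'_{e,m}(n) - p'_{o,m}(n)$ as the coefficient of $q^n$ in a suitable product and then evaluate that product via Theorem \ref{theorem1}. Consider the formal power series
\[
F(q) := \prod_{\substack{k \geq 1 \\ k \equiv 0,\,1,\,m-1 \,(\mathrm{mod}\,m)}} (1-q^k).
\]
Expanding this product and selecting a subset of factors corresponds to choosing a partition $\lambda$ of some integer $n$ into distinct parts, each congruent to $0$, $1$, or $m-1$ modulo $m$, and contributes $(-1)^{\ell(\lambda)} q^n$, where $\ell(\lambda)$ is the number of parts. Hence the coefficient of $q^n$ in $F(q)$ equals $p'_{e,m}(n) - p'_{o,m}(n)$.

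Next, since $m \geq 3$ guarantees that the three residues $0$, $1$, and $m-1$ are pairwise distinct modulo $m$, the product $F(q)$ splits cleanly as
\[
F(q) = (q;q^m)_\infty\,(q^{m-1};q^m)_\infty\,(q^m;q^m)_\infty.
\]
Applying Theorem \ref{theorem1} with $g=m+2$, so that $g-3=m-1$ and $g-2=m$, and using the reformulation \eqref{equation2}, I obtain
\[
F(q) = \sum_{n=0}^\infty e_{m+2,n}\,q^n.
\]

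Comparing the coefficient of $q^n$ on both sides for $n \geq 1$ finishes the proof: the left-hand side is $p'_{e,m}(n) - p'_{o,m}(n)$, while by the definition of $e_{m+2,n}$ the right-hand side equals $(-1)^k$ if $n = P_{m+2,k}$ or $n = Q_{m+2,k}$ for some $k \in \mathbb{N}$, and $0$ otherwise. There is essentially no obstacle here; the only subtlety is confirming that the residue classes $\{0,1,m-1\}$ remain distinct when $m=3$ (they do, being $\{0,1,2\}$), so that no factor of $F(q)$ is double-counted. The result is a direct Legendre-type corollary of the main theorem, generalizing the classical identity recovered at $m=3$.
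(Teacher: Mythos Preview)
Your proof is correct and follows essentially the same approach as the paper: interpret the coefficient of $q^n$ in $(q;q^m)_\infty(q^{m-1};q^m)_\infty(q^m;q^m)_\infty$ as $p'_{e,m}(n)-p'_{o,m}(n)$ via the signed expansion into distinct parts, then apply Theorem~\ref{theorem1} with $g=m+2$. Your explicit check that the residues $0,1,m-1$ are pairwise distinct for $m\geq 3$ is a nice added detail that the paper leaves implicit.
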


\begin{proof}
In the infinite product expansion of \[(q;q^m)_\infty (q^{m-1}; q^m)_\infty (q^m ; q^m)_\infty, \]the coefficients of $q^N$ has $+1$ contribution from a partition of $N$ that consists of an even number of parts, where each part is congruent to $0$ modulo $m$, $1$ modulo $m$ or $m-1$ modulo $m$. It has $-1$ contribution from a partition of $N$ that consists of an odd number of parts, where each part is congruent to $0$ modulo $m$, $1$ modulo $m$ or $m-1$ modulo $m$. By Theorem \ref{theorem1}, our result follows immediately.
\end{proof}

\begin{remark}
In the case $m=3$, Theorem \ref{legendre} is reduced to the interesting observation made by Legendre on the Pentagonal Number Theorem (\cite[p.2]{Andrews1}).
\end{remark}

\subsection{Recurrence relations and convolution sums for restricted divisor sums} \label{section2.3}

\begin{definition}
Let $r\in\mathbb{N}_0$. Let $n,m\in \mathbb{N}$. The function $\sigma_{r,m}(n)$ is defined by \[\sigma_{r,m} (n):=\sum_{\{d\in\mathbb{N}, d|n, d\equiv r\text{ mod }m\}}d. \]
\end{definition}

\begin{definition}
Let $n,m\in \mathbb{N}$. The restricted divisor sum $\sigma'_m(n)$ is defined by \[\sigma'_m(n):=\sigma_{m-1,m}(n)+\sigma_{0,m}(n)+\sigma_{1,m}(n).\]
\end{definition}

\noindent The restricted Lambert series for $\sigma_{r,m}(n)$ is \[\sum_{n=1}^\infty \sigma_{r,m}(n)q^n=\sum_{\{n\text{ }|\text{ }n\equiv r\text{ mod }m\}}\frac{nq^n}{1-q^n}.\]The restricted Lambert series for $\sigma'_m(n)$ is
\begin{align}
\label{equation3} \sum_{n=1}^\infty \sigma'_m(n)q^n & =\sum_{\{n\text{ }|\text{ }n\equiv 0\text{ mod }m,\text{ }n\equiv 1\text{ mod }m,\text{ }n\equiv m-1\text{ mod }m\}}\frac{nq^n}{1-q^n}.
\end{align}

\noindent The following theorems connect $p'_m(n)$, $\sigma'_m(n)$ and the generalized $n^{\text{th}}$-gonal numbers.

\begin{theorem} \label{theorem2} 
Let $m\geq 3$  and $n\in \mathbb{N}$.
\[
\sigma'_m(n) =\sum_{k=0}^\infty (-1)^{k+1} ( P_{m+2,k}  \cdot p'_m(n-P_{m+2,k})+ Q_{m+2,k} \cdot p'_m(n-Q_{m+2,k})).
\]
\end{theorem}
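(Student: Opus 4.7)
The plan is to mimic Euler's derivation of his classical convolution formula (\ref{euler}), but using the generalized product $F(q):=(q;q^m)_\infty (q^{m-1};q^m)_\infty(q^m;q^m)_\infty$ in place of $(q;q)_\infty$. The key idea is to evaluate the logarithmic derivative $q F'(q)/F(q)$ in two different ways: once by differentiating the product factor-by-factor, and once by differentiating the sum-side furnished by Theorem~\ref{theorem1}.

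First I would expand $\log F(q)$ as a sum of terms of the form $\log(1-q^{a+km})$ with $a\in\{1,m-1,0\}$ (the last one with $k\ge 1$). Applying $q\,\frac{d}{dq}$ to each logarithm produces a term $-(a+km)q^{a+km}/(1-q^{a+km})$, which upon summing reproduces exactly the restricted Lambert series on the right of (\ref{equation3}). This yields the identity
\begin{equation*}
q\,\frac{F'(q)}{F(q)} = -\sum_{n=1}^\infty \sigma'_m(n)\, q^n.
\end{equation*}

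Next, using the sum-side of Theorem~\ref{theorem1} (equivalently (\ref{equation2})) I would differentiate termwise to obtain
\begin{equation*}
q F'(q) = \sum_{k=1}^\infty (-1)^k\bigl( P_{m+2,k}\,q^{P_{m+2,k}} + Q_{m+2,k}\,q^{Q_{m+2,k}}\bigr),
\end{equation*}
and combine this with the previous display after multiplying through by the generating function $G(q):=\sum_{n=0}^\infty p'_m(n)q^n$, which by (\ref{equation0.5}) satisfies $F(q)\,G(q)=1$. This gives
\begin{equation*}
\sum_{n=1}^\infty \sigma'_m(n)\, q^n \;=\; -q F'(q)\,G(q) \;=\; \sum_{k=1}^\infty (-1)^{k+1}\bigl( P_{m+2,k}\,q^{P_{m+2,k}} + Q_{m+2,k}\,q^{Q_{m+2,k}}\bigr) \cdot \sum_{j=0}^\infty p'_m(j)q^j.
\end{equation*}
Comparing coefficients of $q^n$ then produces exactly the claimed formula (the $k=0$ term in the statement contributes nothing since $P_{m+2,0}=Q_{m+2,0}=0$).

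The only real step of substance is the logarithmic-derivative calculation that identifies the restricted Lambert series; once (\ref{equation3}) is recognized as the output, the rest is a formal manipulation of power series. I do not expect any genuine obstacle, only bookkeeping to make sure the three arithmetic progressions $\{1,m-1,0\}\pmod m$ in $\sigma'_m$ correspond to the three infinite product factors in $F(q)$ and that no residue class is double-counted.
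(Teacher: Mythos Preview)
Your proposal is correct and follows essentially the same argument as the paper: define the triple product $G_m(x)=F(q)$, compute its logarithmic derivative to recover the restricted Lambert series (\ref{equation3}) for $\sigma'_m$, differentiate the sum side of Theorem~\ref{theorem1} to get $xG'_m(x)=\sum n e_{m+2,n}x^n$, and then multiply by $1/G_m(x)=\sum p'_m(n)x^n$ before comparing coefficients. Your remark that the $k=0$ term is vacuous and your check that the residues $0,1,m-1$ are distinct for $m\ge 3$ are both appropriate pieces of bookkeeping.
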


\begin{proof}
Let $m\in\mathbb{N}$ such that $m\geq 3$. Define the function $G_m(x)$ as follows: \begin{align}
\label{G} G_m(x)&=\prod_{k=1}^\infty (1-x^{km})(1-x^{km-(m-1)})(1-x^{km-1}).\end{align}Taking the logarithm of $G_m(x)$, we get \[\ln(G_m(x))=\sum_{k=1}^\infty \big(\ln(1-x^{km})+\ln(1-x^{km-(m-1)})+\ln(1-x^{km-1})   \big).\]Differentiating and then multiplying by $x$, we get 
\begin{align}
\label{equation4} -\frac{xG'_m(x)}{G_m(x)}=\sum_{k=1}^\infty \Big( \frac{kmx^{km}}{1-x^{km}}+\frac{(km-(m-1))x^{km-(m-1)}}{1-x^{km-(m-1)}}+\frac{(km-1)x^{km-1}}{1-x^{km-1}}   \Big).
\end{align}By (\ref{equation0.5}), 
\begin{align}
\label{equation5} \frac{1}{G_m(x)}&=\sum_{n=0}^\infty p'_m(n) x^n.
\end{align}By (\ref{equation2}),
\begin{align}
\label{equation6} xG'_m(x) &= \sum_{n=0}^\infty n e_{m+2,n} x^{n}.
\end{align}Putting (\ref{equation3}), (\ref{equation5}), (\ref{equation6}) into (\ref{equation4}), we get
\begin{align}
\nonumber \Big(\sum_{n=0}^\infty p'_m(n) x^n\Big) \Big(\sum_{n=0}^\infty n e_{m+2,n} x^{n} \Big) &= - \sum_{n=1}^\infty \sigma'_m(n)x^n.
\end{align}We get the desired result by comparing coefficients of $x^n$ on both sides of the equation.
\end{proof}

\begin{theorem} \label{theorem3} Let $m\geq 3$  and $n\in \mathbb{N}$.
\[
\sigma'_m(n) = -n e_{m+2,n}+\sum_{k=0}^{n-1} (-1)^{k+1} \big( \sigma'_m(n-P_{m+2,k})+\sigma'_m(n- Q_{m+2,k}) \big). 
\]
\end{theorem}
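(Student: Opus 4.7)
The plan is to reuse the generating-function identity that drives the proof of Theorem \ref{theorem2}, but multiply through by $G_m(x)$ instead of dividing by it. Starting from equation (\ref{equation4}), rewrite it as
\[
-xG'_m(x) \;=\; G_m(x)\cdot \sum_{n=1}^\infty \sigma'_m(n)\, x^n,
\]
and then apply (\ref{equation2}) to $G_m(x)$ and (\ref{equation6}) to $xG'_m(x)$ to obtain the clean power-series identity
\[
-\sum_{n=1}^\infty n\, e_{m+2,n}\, x^n \;=\; \Bigl(\sum_{n=0}^\infty e_{m+2,n}\, x^n\Bigr)\cdot\Bigl(\sum_{n=1}^\infty \sigma'_m(n)\, x^n\Bigr).
\]

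Next, I would compare coefficients of $x^n$ for $n\geq 1$. The Cauchy product on the right is $\sum_{k=0}^{n-1} e_{m+2,k}\,\sigma'_m(n-k)$, and since $e_{m+2,0}=1$ its $k=0$ slot contributes exactly $\sigma'_m(n)$. Isolating this term gives
\[
\sigma'_m(n) \;=\; -n\, e_{m+2,n} \;-\; \sum_{k=1}^{n-1} e_{m+2,k}\,\sigma'_m(n-k).
\]

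To put the result into the form stated in Theorem \ref{theorem3}, I invoke the definition of $e_{m+2,k}$: it vanishes outside $k=P_{m+2,j}$ or $k=Q_{m+2,j}$ for some $j\geq 1$, where it equals $(-1)^j$. Reindexing the residual sum by $j$ turns it into
\[
-\sum_{j\geq 1}(-1)^{j}\bigl(\sigma'_m(n-P_{m+2,j}) + \sigma'_m(n-Q_{m+2,j})\bigr) \;=\; \sum_{j\geq 1}(-1)^{j+1}\bigl(\sigma'_m(n-P_{m+2,j}) + \sigma'_m(n-Q_{m+2,j})\bigr),
\]
which is the claimed identity (the sum truncates automatically because $\sigma'_m$ vanishes on nonpositive inputs).

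Since the whole argument is an exercise in comparing coefficients, I do not expect any real obstacle. The only subtle point is the bookkeeping at $k=0$: because $P_{m+2,0}=Q_{m+2,0}=0$, a literal reading of the statement with $k=0$ included would double-count $\sigma'_m(n)$. The correct interpretation is that the $k=0$ slot of the Cauchy product is exactly what gets solved for and moved to the left-hand side, so the reindexed summation over pentagonal/heptagonal/etc.\ numbers effectively runs over $j\geq 1$.
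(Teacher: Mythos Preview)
Your approach is essentially identical to the paper's: it multiplies (\ref{equation4}) through by $G_m(x)$, invokes (\ref{equation2}) and (\ref{equation6}) to obtain the power-series identity (\ref{equation7}), and then compares coefficients of $x^n$. Your write-up is in fact more detailed than the paper's, which stops at ``compare coefficients''; in particular your observation about the $k=0$ term is a genuine remark on how the stated summation range should be read.
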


\begin{proof}
By (\ref{equation3}), (\ref{equation4}), (\ref{equation6}), we get 
\begin{align}
\label{equation7} -\sum_{n=0}^\infty n e_{m+2,n} x^n &= \Big( \sum_{n=0}^\infty e_{m+2,n} x^n \Big) \Big( \sum_{n=0}^\infty \sigma'_m(n) x^n \Big).
\end{align}We get the desired result by comparing coefficients of $x^n$ on both sides of the equation.
\end{proof}

\begin{theorem} \label{theorem4} Let $m\geq 3$ and $n\in \mathbb{N}$.
\[
np_m'(n) =\sum_{k=0}^\infty \sigma'_m (k) p'_m(n-k).
\]
\end{theorem}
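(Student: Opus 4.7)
The plan is to imitate Euler's original proof of $np(n)=\sum_k \sigma(k)p(n-k)$ by using a logarithmic-derivative trick, reusing the machinery already set up for Theorem \ref{theorem2}. The key input is the generating function identity (\ref{equation5}), which writes $\sum_{n\geq 0} p'_m(n)x^n = 1/G_m(x)$, together with the identity derived in the proof of Theorem \ref{theorem2} that
\[
-\frac{xG'_m(x)}{G_m(x)} \;=\; \sum_{n=1}^\infty \sigma'_m(n)\,x^n,
\]
which follows by combining (\ref{equation4}) with the Lambert series representation (\ref{equation3}).

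First I would apply the operator $x\frac{d}{dx}$ to both sides of (\ref{equation5}). On the right-hand side this is trivial, giving $\sum_{n\geq 0} n\,p'_m(n)\,x^n$. On the left-hand side I would use the chain rule to write
\[
x\frac{d}{dx}\frac{1}{G_m(x)} \;=\; -\frac{xG'_m(x)}{G_m(x)^2} \;=\; \frac{1}{G_m(x)}\cdot\Bigl(-\frac{xG'_m(x)}{G_m(x)}\Bigr),
\]
which by the two displayed identities above equals the Cauchy product
\[
\Bigl(\sum_{n=0}^\infty p'_m(n)x^n\Bigr)\Bigl(\sum_{n=1}^\infty \sigma'_m(n)x^n\Bigr).
\]

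Equating the two expressions and comparing coefficients of $x^n$ yields
\[
n\,p'_m(n) \;=\; \sum_{k=1}^{n}\sigma'_m(k)\,p'_m(n-k),
\]
which is the claimed identity once one adopts the convention $\sigma'_m(0)=0$ (so that the sum may start at $k=0$). I do not anticipate any real obstacle here: all nontrivial ingredients are already present in the proof of Theorem \ref{theorem2}, and the argument amounts to packaging $x\frac{d}{dx}\log G_m(x)$ in two different ways and reading off coefficients.
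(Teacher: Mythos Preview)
Your proposal is correct and essentially identical to the paper's own proof: the paper sets $F_m(x)=1/G_m(x)$, computes $F_m'(x)=-\frac{G_m'(x)}{G_m(x)}\cdot F_m(x)$, multiplies by $x$, and compares coefficients using (\ref{equation3}), (\ref{equation4}), and (\ref{equation5}). The only cosmetic difference is that the paper differentiates first and then multiplies by $x$, whereas you apply $x\frac{d}{dx}$ in one step.
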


\begin{proof}
Based on the definition of $G_m(x)$ in (\ref{G}), let the function $F_m(x)$ be \[F_m(x)= \frac{1}{G_m(x)}.\]Then
\begin{align}
\label{equation8} F_m'(x)&=-\frac{G_m'(x)}{(G_m(x))^2}=-\frac{G_m'(x)}{G_m(x)}\cdot  \frac{1}{G_m(x)}=-\frac{G_m'(x)}{G_m(x)}\cdot      F_m(x).
\end{align}By (\ref{equation3}), (\ref{equation4}), 
\begin{align}
\label{equation9} -\frac{G_m'(x)}{G_m(x)} &=\sum_{n=1}^\infty \sigma'_m(n) x^{n-1}.
\end{align}By (\ref{equation5}), 
\begin{align}
\label{equation10} F_m'(x) &= \sum_{n=0}^\infty np_m'(n) x^{n-1}.
\end{align}Putting (\ref{equation5}), (\ref{equation9}), (\ref{equation10}) into (\ref{equation8}) and multiplying by $x$, we get
\begin{align}
\label{equation11}    \sum_{n=0}^\infty  np_m'(n)x^n &= \Big(\sum_{n=0}^\infty \sigma_m'(n)x^n  \Big)\Big( \sum_{n=0}^\infty p_m'(n) x^n  \Big).
\end{align}We get the desired result by comparing coefficients of $x^n$ on both sides of the equation (\ref{equation11}).
\end{proof}

\begin{remark}
In the case $m=3$, Theorem \ref{theorem4} is identical to (\ref{euler}).
\end{remark} 

\section{Identities for Bell polynomials} \label{section3}

\subsection{Preliminaries} \label{subsection3.1}

Let $(x_1, x_2, \dots)$ be a sequence of real numbers. The {\it partial} exponential Bell polynomials are polynomials given by 
\begin{align*}
B_{n,k}(x_1,x_2,\dots,x_{n-k+1}) &= \sum_{\pi(n,k)} \frac{n!}{j_1 ! j_2 ! \dots j_{n-k+1}!} \Big(\frac{x_1}{1!} \Big)^{j_1} \Big(\frac{x_2}{2!} \Big)^{j_2}\dots \Big(\frac{x_{n-k+1}}{(n-k+1)!} \Big)^{j_{n-k+1}}
\end{align*}where $\pi (n,k)$ is the positive integer sequence $(j_1, j_2, j_{n-k+1})$ satisfies the following equations: 
\begin{align*}
j_1 +j_2 +\dots +j_{n-k+1} &=k, \\
j_1 + 2 j_2 + \dots + (n-k+1) j_{n-k+1} &=n.
\end{align*}For $n\geq 1$, the $n^{\text{th}}$-complete Bell polynomial $B_n(x_1, \dots, x_n)$ is the following:
\begin{align*}
  B_n (x_1\dots x_n)  &=\sum_{k=1}^n B_{n,k} (x_1, \dots , x_{n-k+1}).
\end{align*}The complete exponential Bell polynomials can also be defined by power series expansion as follows:
\begin{align}
  \label{equation3.1}  \text{exp}\Big( \sum_{m=1}^\infty x_m \frac{t^m}{m!} \Big) &= \sum_{n=0}^\infty B_n (x_1, \dots , x_n) \frac{t^n}{n!},
\end{align}where $B_0\equiv 1$. Alternatively, the complete Bell polyomials can be recursively defined by
\begin{align}
\label{equation3.2} B_{n+1}(x_1,\dots,x_{n+1})&=\sum_{i=0}^n \binom{n}{i} B_{n-i}(x_1,\dots, x_{n-i}) x_{i+1}.
\end{align}One interesting property of the Bell polynomials is that there exists an inversion formula in the following sense. If we define
\begin{align}
\label{equation3.25} y_n&= B_n(x_1, x_2, \dots, x_n),
\end{align}then
\begin{align}
\label{equation3.3} x_n &= \sum_{k=1}^n (-1)^{k-1} (k-1)!\cdot B_{n,k} (y_1, \dots, y_{n-k+1}).
\end{align}For detailed properties of such inverse formulas, see the paper written by Chaou et. al \cite{Chaou}. Bell polynomials were first introduced by Bell \cite{Bell}. The books written by Comtet \cite{Comtet} and Riordan \cite{Riordan} serve as excellent references for the numerous applications of Bell polynomials in combinatorics. Recently, there has been extensive research in finding identities on (partial/complete) Bell polynomials. The paper written by W. Wang and T. Wang \cite{Wang} provides many interesting identities for partial Bell polynomials. Bouroubi and Benyahia-Tani \cite{Sadek} and the author \cite{Leung} proved some new identities for complete Bell polynomials based on Ramanujan's congruences.

\subsection{Identities for complete Bell polynomials and some corollaries} \label{section3.2}

We recall the notations $\sigma'_m(n)$, $p'_m(n)$ and $e_{g,n}$ used in Section \ref{section2.2} and Section \ref{section2.3}.

\begin{theorem} \label{theorem3.1}
Let $n\in\mathbb{N}$ and $m\geq 3$.
\[
B_n(d_1,d_2,\dots, d_n)= n!\cdot e_{m+2,n}
\]where $d_n=-(n-1)!\cdot \sigma'_m(n)$.
\end{theorem}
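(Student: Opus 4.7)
The plan is to recognize the right-hand side of Theorem \ref{theorem1}, in its rewritten form $(q;q^{m})_\infty(q^{m-1};q^m)_\infty(q^m;q^m)_\infty = \sum_{n\ge 0} e_{m+2,n}\,q^n$, as the exponential generating function of the sequence $(B_n(d_1,\dots,d_n))_{n\ge 0}$ (up to factorials). Concretely, I would use the power series definition (\ref{equation3.1}) of the complete Bell polynomials, plug in $x_n=d_n=-(n-1)!\,\sigma'_m(n)$, and try to identify the resulting exponential with $G_m(t)$.

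First I would compute the inner sum in (\ref{equation3.1}) under the substitution $x_n=d_n$:
\[
\sum_{n=1}^\infty d_n\,\frac{t^n}{n!} \;=\; -\sum_{n=1}^\infty \frac{(n-1)!\,\sigma'_m(n)}{n!}\,t^n \;=\; -\sum_{n=1}^\infty \frac{\sigma'_m(n)}{n}\,t^n.
\]
Next I would invoke the identity derived in the proof of Theorem \ref{theorem4} (combining (\ref{equation3}) and (\ref{equation4})), namely $-G'_m(x)/G_m(x)=\sum_{n\ge 1}\sigma'_m(n)\,x^{n-1}$. Integrating term by term from $0$ to $t$, and using $G_m(0)=1$, this gives
\[
-\ln G_m(t) \;=\; \sum_{n=1}^\infty \frac{\sigma'_m(n)}{n}\,t^n,
\]
so the exponent above is exactly $\ln G_m(t)$.

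Therefore $\exp\bigl(\sum_{n\ge 1} d_n t^n/n!\bigr)=G_m(t)$. On the one hand, by (\ref{equation3.1}) this equals $\sum_{n\ge 0} B_n(d_1,\dots,d_n)\,t^n/n!$; on the other hand, by (\ref{equation2}) with $g=m+2$ it equals $\sum_{n\ge 0} e_{m+2,n}\,t^n$. Comparing coefficients of $t^n$ yields $B_n(d_1,\dots,d_n)/n!=e_{m+2,n}$, which is the claim.

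There is no real obstacle here: once one notices that $d_n$ has been engineered precisely so that $\sum d_n t^n/n!$ is the logarithm of $G_m(t)$, the argument reduces to integrating the logarithmic-derivative relation already established in Section \ref{section2.3} and then applying Theorem \ref{theorem1}. The only small care needed is the constant of integration, which is handled by $G_m(0)=1$, and the indexing convention $B_0\equiv 1$ matching the $n=0$ term $e_{m+2,0}=1$.
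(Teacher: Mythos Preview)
Your proposal is correct and follows essentially the same route as the paper: both identify $\sum_{n\ge 1} d_n t^n/n! = \ln G_m(t)$, exponentiate via (\ref{equation3.1}), and compare with (\ref{equation2}). The only cosmetic difference is that the paper obtains $\ln G_m(t) = -\sum_{n\ge 1}\sigma'_m(n)t^n/n$ by expanding each factor with the series for $\ln(1-x)$ directly, whereas you reach the same identity by integrating the logarithmic-derivative relation (\ref{equation4}) already established in Section~\ref{section2.3}.
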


\begin{proof}
By (\ref{G}), we take the logarithm of $G_m(x)$ and use the formal power series expansion of $\ln (1-x)$ to get
\begin{align}
\label{equation3.4} \ln(G_m(x))&=\sum_{k=1}^\infty \big(\ln(1-x^{km})+\ln(1-x^{km-(m-1)})+\ln(1-x^{km-1})   \big) \\
\nonumber  &= -\sum_{k=1}^\infty \sum_{j=1}^\infty \frac{(x^{km})^j}{j}  -\sum_{k=1}^\infty \sum_{j=1}^\infty \frac{(x^{km-(m-1)})^j}{j}  -\sum_{k=1}^\infty \sum_{j=1}^\infty \frac{(x^{km-1})^j}{j}\\
\label{equation3.5} &=  -\sum_{k=1}^\infty \frac{\sigma'_m(n)}{n} x^n = \sum_{k=1}^\infty \big(-(n-1)!\cdot \sigma'_m(n) \big) \frac{x^n}{n!}.
\end{align}Taking exponential on both sides of (\ref{equation3.5}) to get 
\begin{align}
\label{equation3.6} G_m(x) &= \exp \Big( \sum_{k=1}^\infty d_n\frac{x^n}{n!}\Big)= \sum_{n=0}^\infty B_n (d_1, \dots , d_n) \frac{x^n}{n!}
\end{align}where $d_n=-(n-1)!\cdot \sigma'_m(n)$ and the last equality is due to (\ref{equation3.1}). By (\ref{equation2}), 
\begin{align}
\label{equation3.7} G_m(x) &=\sum_{n=0}^\infty e_{m+2,n} x^n.
\end{align}Now the result is clear by comparing (\ref{equation3.6}) and (\ref{equation3.7}).
\end{proof}

\begin{theorem} \label{theorem3.2}
Let $n\in\mathbb{N}$ and $m\geq 3$.
\[
B_n(c_1,c_2,\dots, c_n)= n!\cdot p'_m(n)
\]where $c_n=(n-1)!\cdot \sigma'_m(n)$.
\end{theorem}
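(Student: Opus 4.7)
The plan is to parallel the proof of Theorem \ref{theorem3.1}, but applied to $F_m(x)=1/G_m(x)$ rather than $G_m(x)$ itself. The point is that negating the logarithmic series that appeared in the previous proof converts the $d_n$'s into the $c_n$'s, and the reciprocal of $G_m$ is precisely the generating function of $p'_m(n)$ by (\ref{equation5}).

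Concretely, I would start from the identity (\ref{equation3.5}) established inside the proof of Theorem \ref{theorem3.1}, namely
\[
\ln(G_m(x)) = \sum_{n=1}^\infty \bigl(-(n-1)!\cdot \sigma'_m(n)\bigr)\frac{x^n}{n!}.
\]
Multiplying through by $-1$ gives
\[
-\ln(G_m(x)) = \ln\!\Bigl(\frac{1}{G_m(x)}\Bigr) = \sum_{n=1}^\infty \bigl((n-1)!\cdot \sigma'_m(n)\bigr)\frac{x^n}{n!} = \sum_{n=1}^\infty c_n\frac{x^n}{n!}.
\]
Exponentiating both sides and invoking the defining power series expansion (\ref{equation3.1}) for the complete Bell polynomials, I obtain
\[
\frac{1}{G_m(x)} = \exp\!\Bigl(\sum_{n=1}^\infty c_n \frac{x^n}{n!}\Bigr) = \sum_{n=0}^\infty B_n(c_1,\dots,c_n)\frac{x^n}{n!}.
\]

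On the other hand, by (\ref{equation5}) we have $1/G_m(x) = \sum_{n=0}^\infty p'_m(n)x^n$. Comparing coefficients of $x^n$ in the two expansions yields $B_n(c_1,\dots,c_n)/n! = p'_m(n)$, which is the desired identity. There is no real obstacle here: every step is either a sign flip of what was already done in Theorem \ref{theorem3.1} or a direct appeal to the formal power series definition of complete Bell polynomials. The only thing to double-check is the formal-series legitimacy of swapping exponentials and logarithms, which is immediate since $G_m(x)$ has constant term $1$.
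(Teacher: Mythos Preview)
Your proof is correct and follows essentially the same route as the paper's: define $F_m(x)=1/G_m(x)$, observe that $\ln F_m(x)$ is the negative of the series computed in Theorem~\ref{theorem3.1}, exponentiate via (\ref{equation3.1}), and compare with the generating function (\ref{equation0.5}) for $p'_m(n)$. The only cosmetic difference is that you cite (\ref{equation3.5}) directly rather than re-deriving the logarithmic expansion for $F_m$.
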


\begin{proof}
It is essentially the same as the proof of Theorem \ref{theorem3.1}. Let the function $F_m(x)$ be \[F_m(x)=\frac{1}{G_m(x)}.\]By taking logarithm of $F_m(x)$ and using the formal power series of $\ln (1-x)$, we get 
\begin{align}
\nonumber \ln(F_m(x))&=-\sum_{k=1}^\infty \big(\ln(1-x^{km})+\ln(1-x^{km-(m-1)})+\ln(1-x^{km-1})   \big)\\
\label{equation3.8}  &= \sum_{k=1}^\infty \big((n-1)!\cdot \sigma'_m(n) \big) \frac{x^n}{n!}.
\end{align}We get the result as desired by taking exponential on both sides of (\ref{equation3.8}), applying (\ref{equation3.1}) and generating function of $p'_m(n)$ in (\ref{equation0.5}).
\end{proof}

\begin{remark}
It is worthwhile to notice that Theorem \ref{theorem3} (resp. Theorem \ref{theorem4}) can be proved by applying Theorem \ref{theorem3.1} (resp. Theorem \ref{theorem3.2}) and the convolution properties of complete Bell polynomials shown in (\ref{equation3.2}). More precisely, by Theorem \ref{theorem3.2}, the equation (\ref{equation3.2}) becomes
\begin{align}
\nonumber (n+1)!\cdot p'_m(n+1) &= \sum_{i=0}^n \binom{n}{i} (n-i)! \cdot p'_m(n-i) \cdot i!\cdot \sigma'_m(i+1) \\
\label{special} (n+1) p'_m(n+1) &= \sum_{i=0}^n p'_m(n-i) \sigma'_m(i+1). 
\end{align}Now it is obvious that (\ref{special}) is equivalent to Theorem \ref{theorem4}. Likewise, Theorem \ref{theorem3} can be proved by using Theorem \ref{theorem3.1} and the equaion (\ref{equation3.2}). 
\end{remark}

\noindent By the inversion formulas of Bell polynomials as stated in (\ref{equation3.25}) and (\ref{equation3.3}), we immediately obtain the following corollaries due to Theorem \ref{theorem3.1} and Theorem \ref{theorem3.2} respectively.

\begin{corollary} \label{corollary3.1}
Let $n\in\mathbb{N}$ and $m\geq 3$.
\[
\sigma'_m(n) =\frac{1}{(n-1)!} \sum_{k=1}^\infty (-1)^k (k-1)!\cdot B_{n,k} (1!\cdot e_{m+2,1}, 2!\cdot e_{m+2.2}, \cdots, (n-k+1)!\cdot e_{m+2, n-k+1}). 
\]
\end{corollary}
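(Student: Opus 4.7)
The plan is to obtain this corollary as a direct application of the Bell polynomial inversion identity recorded in (\ref{equation3.25})--(\ref{equation3.3}) to the content of Theorem \ref{theorem3.1}. Recall that (\ref{equation3.25})--(\ref{equation3.3}) say: if $y_n = B_n(x_1,\ldots,x_n)$, then
\[
x_n = \sum_{k=1}^n (-1)^{k-1}(k-1)!\cdot B_{n,k}(y_1,\ldots,y_{n-k+1}).
\]

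First, I would set $x_n := d_n = -(n-1)!\cdot \sigma'_m(n)$, matching the notation of Theorem \ref{theorem3.1}. By that theorem, we have
\[
y_n := B_n(d_1,d_2,\ldots,d_n) = n!\cdot e_{m+2,n}.
\]
So the hypothesis of the inversion formula is satisfied with this choice of $(x_n)$ and $(y_n)$. Substituting these values into (\ref{equation3.3}) yields
\[
-(n-1)!\cdot \sigma'_m(n) = \sum_{k=1}^n (-1)^{k-1}(k-1)!\cdot B_{n,k}\bigl(1!\cdot e_{m+2,1},\,2!\cdot e_{m+2,2},\,\ldots,\,(n-k+1)!\cdot e_{m+2,n-k+1}\bigr).
\]

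Finally, I would divide both sides by $-(n-1)!$ and absorb the minus sign by shifting the exponent on $(-1)$ from $k-1$ to $k$. Since $B_{n,k}$ vanishes when $k>n$, extending the upper summation index to $\infty$ is harmless and produces the stated form of the identity.

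There is no genuine obstacle in this argument; the only care needed is bookkeeping of the sign and of the factorial weights $j!$ appearing as arguments of $B_{n,k}$, which must coincide with the values $y_j = j!\cdot e_{m+2,j}$ dictated by Theorem \ref{theorem3.1}. The corollary is therefore essentially a restatement of Theorem \ref{theorem3.1} through the inversion machinery of Bell polynomials.
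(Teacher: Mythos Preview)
Your proposal is correct and follows exactly the approach the paper takes: the corollary is stated as an immediate consequence of applying the Bell polynomial inversion formula (\ref{equation3.25})--(\ref{equation3.3}) to Theorem \ref{theorem3.1}. Your bookkeeping of the sign change and the extension of the summation to $k=\infty$ is accurate.
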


\begin{corollary} \label{corollary3.2}
Let $n\in\mathbb{N}$ and $m\geq 3$.
\[
\sigma'_m(n) =\frac{1}{(n-1)!} \sum_{k=1}^\infty (-1)^{k-1} (k-1)!\cdot B_{n,k} (1!\cdot p'_{m}(1), 2!\cdot p'_m(2), \dots, (n-k+1)!\cdot p'_m(n-k+1)). 
\]
\end{corollary}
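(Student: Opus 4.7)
The plan is to apply the Bell polynomial inversion formula (equations~(\ref{equation3.25}) and~(\ref{equation3.3})) directly to the identity established in Theorem~\ref{theorem3.2}. Since the corollary is explicitly billed as ``immediate'' from that theorem plus the inversion machinery, the proof should be a one-paragraph substitution rather than anything substantive.

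First, I would recall Theorem~\ref{theorem3.2}, which asserts that
\[
B_n(c_1,c_2,\dots,c_n) = n!\cdot p'_m(n),\qquad c_i=(i-1)!\cdot\sigma'_m(i).
\]
I would then set $y_n := n!\cdot p'_m(n)$ and $x_n := c_n = (n-1)!\cdot\sigma'_m(n)$, observing that this is exactly the hypothesis $y_n = B_n(x_1,\dots,x_n)$ appearing in the inversion formula~(\ref{equation3.25}).

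Next, I would invoke~(\ref{equation3.3}) to conclude
\[
(n-1)!\cdot \sigma'_m(n) = x_n = \sum_{k=1}^n (-1)^{k-1}(k-1)!\cdot B_{n,k}(y_1,\dots,y_{n-k+1}),
\]
and substitute $y_j = j!\cdot p'_m(j)$ into the partial Bell polynomial arguments. Dividing through by $(n-1)!$ yields the claimed identity. (The upper limit $n$ may be written as $\infty$ without harm because $B_{n,k}$ vanishes for $k>n$, matching the convention used in the corollary's statement.)

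There is essentially no obstacle here: the only thing to verify is that the indexing in the inversion formula lines up with the indexing of $c_n$, and that the factorial normalization $(n-1)!$ on the left-hand side is exactly absorbed by the prefactor $\tfrac{1}{(n-1)!}$. The analogous argument, applied to Theorem~\ref{theorem3.1} in place of Theorem~\ref{theorem3.2}, produces Corollary~\ref{corollary3.1}, so the two corollaries are genuinely parallel.
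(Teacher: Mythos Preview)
Your proposal is correct and matches the paper's own approach exactly: the paper states only that Corollary~\ref{corollary3.2} follows immediately from Theorem~\ref{theorem3.2} together with the inversion formulas~(\ref{equation3.25}) and~(\ref{equation3.3}), and you have simply spelled out that substitution in detail. There is no additional idea in the paper's proof beyond what you wrote.
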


\noindent It might come as a surprise that RHS of the formula in Corollary \ref{corollary3.1} is equal to the RHS of the formula in Corollary \ref{corollary3.2} as the former one leads to a simple computation (many terms $e_{m+2,n}$ are zeros) while the latter one gives a rather complicated computation due to the terms $p'_m(n)$.

\section{Acknowledgements}
The author is grateful to the editor and the referees for carefully reading the paper and pointing out some mistakes in the first draft of it. Their comments were helpful to improve the quality of the article. The author is supported by Startup Grant 2016 (G00002235) from United Arab Emirates University.


\begin{thebibliography}{99}

\bibitem{Andrews1} G. Andrews, Euler's Pentagonal Number Theorem, {\it Math. Mag.} {\bf 56} (1983), no. 5, 279--284.

\bibitem{ETBell} E.T. Bell, Exponential Polynomials, {\em Ann. Math. (2)} {\bf 35} (1934), 258--277.

\bibitem{Bell} J. Bell, A summary of Euler's work on the pentagonal number theorem, {\it Arch. Hist. Exact. Sci.} {\bf 64} (2010), no. 3, 301--373.

\bibitem{Sadek} S. Bouroubi and N. Benyahia Tani, A New Identity for Complete Bell Polynomials based on a Formula of Ramanujan, {\em J. Integer Seq.} {\bf 12} (2009), Article 09.3.5, 6pp.

\bibitem{Osler} T.R. Chandrupatla, A. Hassen and T.J. Osler, Surprising connections between partitions and divisors, {\it College Math. J.} {\bf 38} (2007), no. 4, 278--287.

\bibitem{Chaou} W.-S. Chaou, Leetsch C. Hsu and Peter J.-S. Shiue, Application of Fa\`adi Bruno's Formula in Characterization of Inverse Relations, {\em J. Comput. Appl. Math.} {\bf 190} (2006), 151--169.

\bibitem{Cheung} P. Cheung and V. Kac, {\em Quantum Calculus, Universitext}, Springer-Verlag, New York, 2002.

\bibitem{Comtet} L. Comtet, {\em Advanced Combinatorics: The Art of Finite and Infinite Expansions}, D. Reidel Publishing Co., Dordrecht, 1974.

\bibitem{Leung} H.-H. Leung, Another Identity for Complete Bell Polynomials based on Ramanujan's Congruences, preprint, arXiv:1802.08443 [math.CO].

\bibitem{Ramanujan} S. Ramanujan and L.J. Rogers, Proof of certain identities in combinatory analysis, {\it Cambr. Phil. Soc. Proc.} {\bf 19} (1919), 211--216.

\bibitem{Riordan} J. Riordan, {\em Combinatorial Identities}, Robert E. Krieger Publishing Co., Huntington, N.Y., 1979.

\bibitem{Rogers} L.J. Rogers, Third Memoir on the Expansion of certain Infinite Products, {\it Proc. London Math. Soc.} {\bf 26} (1894), no. 1, 15--32.

\bibitem{Wang} W. Wang and T. Wang, General identities on Bell polynomials, {\it Comput. Math. Appl.} {\bf 58} (2009), no. 1, 104-118.


\end{thebibliography}
\end{document}